\theoremstyle{definition}
\newtheorem{defi}{Definition}
\newtheorem{ex}[defi]{Example}
\theoremstyle{plain}
\newtheorem{thm}[defi]{Theorem}
\newtheorem{lem}[defi]{Lemma}
\newtheorem{alg}[defi]{Algorithm}
\title{Skolem Circles}
\author{James Bubear, Joanne L. Hall\\
\\
School of Mathematical Sciences\\
Queensland University of Technology
}
\begin{document}

\maketitle

\section*{Abstract}
Skolem sequences and Skolem labeled graphs have been described and examined for several decades.  This note explores weak Skolem labelling of cycle graphs, which we call Skolem circles.  The relationship between Skolem sequences and Skolem cirlces is explored, and Skolem circles of small sizes are enumerated, with some loose general bounds established.

\section{Introduction}

A Skolem-type sequence of order $M$ is a sequence $(s_1, s_2, . . . , s_m)$  of positive integers $i \in  D$ with $|D|=m$ such that for each $i\in D$ there is exactly one $ j \in 
\{1, 2, . . .,m-i\}$ such that $s_j = s_j+i = i$.

 If the set $D=\{1,2,3,\dots, m\}$ then $S$ is a {\em Skolem sequence} or order $m$, if the set $D=\{d,d+1,\dots, d+m-1\}$ then $S$ is a {\em Langford sequence} of order $m$ and  defect $d$.   A sequence may also contain a null element often denoted as $0$, if the null element appears in the penultimate position of the sequence it is called a  $hook$.   A Skolem-type sequence is $k$ {\em extended}  if it contains exactly one null symbol which is in position $k$. 
We consider $(0)$ to be the extended Skolem-type sequence of order $0$.

Skolem and Langford sequences were first introduced by Langford in \cite{Langford1958} and  Skolem \cite{skolem1957} in the 1950s.  Skolem and Langford sequences have been used to construct Steiner Triple Systems \cite{skolem1957}, difference sets \cite{abrham1981}
and have many applications in graph theory \cite{mendelsohn1971}\cite{harary1988}\cite{francetic2017}.

There are many sequences with the Skolem property, each interesting on its own or for particular applications \cite{shalaby2007}\cite{francetic2009}.   Another variant of a Skolem sequences is a Skolem labeling of a graph.

A {\em Skolem Labelled Graph} is a graph  $G$ with $2m$ nodes, each  node having a label from the set $\{1,2,\dots, m\}$ such that 
\begin{enumerate}
\item   each label appears exactly twice
\item if any two nodes $v_1,v_2$ have the same label $s$, then $d(v_1,v_2)=s$
\item removing any edge  from the graph violates condition $2$.
\end{enumerate}

In this paper we are interested in Skolem labelling of cycle graphs.  However we use a {\em weak Skolem labeling} including labellings that would violate condition 3.

A Skolem sequence of order $m$ may be represented as a sequence of $2m$ symbols such as \\ $(1 ,1, 4, 2, 3 ,2 ,4 ,3)$, or a set of $m$ ordered pairs $\{(a_1,b_1),(a_2,b_3),\dots,(a_m,b_m)\}$ where $(a_i, b_i)$ are the locations of the symbol $i$ in the sequence with $a_i<b_i$ such as $\{(1,2),(4,6),(5,8),(3,7)\}$.  Thus $b_i-a_i=i$.  
We consider a variation where the arrangement is a circle rather than a sequence.  A {\em Skolem circle} may be represented as a labeling of a circle graph on $2m$ nodes as  in Figure \ref{fig:6sequences}, or as a set of ordered pairs $\{(a_1,b_1),(a_2,b_2),\dots,(a_m,b_m)\}$ where $b_i-a_i\equiv i\mod 2m$. Note that $a_i>b_1$ if and only if $b_i\leq i$.   A Skolem circle may be suitable to use in applications where a finite cyclic group is useful. 

In Section \ref{sec:circle} we list some known results on Skolem sequences and the analogous results for Skolem circles.  In Section \ref{sec:structure} we investigate the structure of Skolem Circles showing how they can be broken into subsequences of Skolem-type. Computational techniques are used in  Section \ref{sec:enumerate} to enumerate distinct Skolem circles of small orders.

\begin{figure}[h]
\begin{center}
\begin{tikzpicture}
\def \n {8}
\def \radius {2cm}
\def \margin {8} 

\def \s{0} 
\def \e{1}
  \node[draw, circle] at ({360/\n * (\s)}:\radius) {\e}; 
 \draw[-, >=latex]   ({360/\n * (\s-1)+\margin}:\radius) 
    arc ({360/\n * (\s-1)+\margin}:{360/\n * (\s)-\margin}:\radius);
\def \s{1} 
\def \e{1}
  \node[draw, circle] at ({360/\n * (\s)}:\radius) {\e}; 
 \draw[-, >=latex]   ({360/\n * (\s-1)+\margin}:\radius) 
    arc ({360/\n * (\s-1)+\margin}:{360/\n * (\s)-\margin}:\radius);
\def \s{2} 
\def \e{4}
  \node[draw, circle] at ({360/\n * (\s)}:\radius) {\e}; 
 \draw[-, >=latex]   ({360/\n * (\s-1)+\margin}:\radius) 
    arc ({360/\n * (\s-1)+\margin}:{360/\n * (\s)-\margin}:\radius);
\def \s{3} 
\def \e{2}
  \node[draw, circle] at ({360/\n * (\s)}:\radius) {\e}; 
 \draw[-, >=latex]   ({360/\n * (\s-1)+\margin}:\radius) 
    arc ({360/\n * (\s-1)+\margin}:{360/\n * (\s)-\margin}:\radius);
\def \s{4} 
\def \e{3}
  \node[draw, circle] at ({360/\n * (\s)}:\radius) {\e}; 
 \draw[-, >=latex]   ({360/\n * (\s-1)+\margin}:\radius) 
    arc ({360/\n * (\s-1)+\margin}:{360/\n * (\s)-\margin}:\radius);
\def \s{5} 
\def \e{2}
  \node[draw, circle] at ({360/\n * (\s)}:\radius) {\e}; 
 \draw[-, >=latex]   ({360/\n * (\s-1)+\margin}:\radius) 
    arc ({360/\n * (\s-1)+\margin}:{360/\n * (\s)-\margin}:\radius);
\def \s{6} 
\def \e{4}
  \node[draw, circle] at ({360/\n * (\s)}:\radius) {\e}; 
  \draw[-, >=latex]   ({360/\n * (\s-1)+\margin}:\radius) 
    arc ({360/\n * (\s-1)+\margin}:{360/\n * (\s)-\margin}:\radius);
\def \s{7} 
\def \e{3}
  \node[draw, circle] at ({360/\n * (\s)}:\radius) {\e}; 
  \draw[-, >=latex]   ({360/\n * (\s-1)+\margin}:\radius) 
    arc ({360/\n * (\s-1)+\margin}:{360/\n * (\s)-\margin}:\radius);

\end{tikzpicture}
\end{center}
\caption{A weak Skolem labeling of a cycle graph on $8$ nodes, also called a Skolem circle on 4 symbols. There are three different edges that could be removed to form a Skolem labeling, hence  this Skolem circle contains 6 distinct Skolem sequences. \label{fig:6sequences}}
\end{figure}
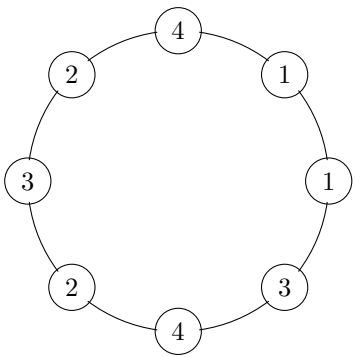
\section{Skolem Circles \label{sec:circle}}

A {\em Skolem circle} is a  weakly Skolem labeled  cycle graph.  A Skolem circle is a cycle graph,   $G$, with $2m$ nodes, each  node having a label from the set $\{1,2,\dots, m\}$ such that 
\begin{enumerate}
\item   each label appears exactly twice
\item if any two nodes $v_1,v_2$ have the same label $s$, then $d(v_1,v_2)=s$
\end{enumerate}

 A  Skolem circle may be represented as a labeling of a cycle graph on $2m$ nodes as  in Figure \ref{fig:6sequences}, or as a set of ordered pairs $\{(a_1,b_1),(a_2,b_2),\dots,(a_m,b_m)\}$ where 
\[b_i-a_i\equiv i\mod 2m.\] This inspiration for this definition comes from using modular arithmetic, rather than integer arithmetic.

A Skolem sequence may be wrapped around a cycle to form a Skolem circle.  For example the sequence (1, 1, 4, 2, 3 ,2 ,4 ,3) can be wrapped around to form the circle in Figure \ref{fig:6sequences}.

Skolem sequences have been studied for a few decades now, there is quite a lot known about them.  Some of the properties of Skolem sequences have analogous properties of Skolem circles.
\begin{thm}\cite{skolem1957}\label{thm:skolem}
A Skolem Sequence of order $m$ exists if and only if $m\equiv 0, 1\mod 4$.
\end{thm}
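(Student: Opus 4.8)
The plan is to prove the two implications separately; necessity is a short counting argument, while sufficiency requires exhibiting explicit sequences.

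For necessity, suppose a Skolem sequence of order $m$ exists and write it as a set of pairs $\{(a_i,b_i):1\le i\le m\}$ with $b_i-a_i=i$ and $\bigcup_{i=1}^m\{a_i,b_i\}=\{1,2,\dots,2m\}$. The key observation is that $a_i+b_i\equiv b_i-a_i=i\pmod 2$, so summing over all pairs gives
\[
\sum_{j=1}^{2m} j \;=\; \sum_{i=1}^m (a_i+b_i)\;\equiv\;\sum_{i=1}^m i \pmod 2,
\]
that is, $m(2m+1)\equiv \frac{m(m+1)}{2}\pmod 2$. Checking the four residue classes of $m$ modulo $4$ shows this congruence holds when $m\equiv 0,1\pmod 4$ and fails when $m\equiv 2,3\pmod 4$, which rules those cases out.

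For sufficiency I would give direct constructions in the two surviving cases $m=4k$ and $m=4k+1$ (these are essentially Skolem's original constructions). In each case one writes down an explicit list of $m$ pairs whose differences are, by inspection, a permutation of $1,2,\dots,m$; the content of the argument is then to verify that the $2m$ positions appearing in those pairs are exactly $1,2,\dots,2m$, each used once. I would do this by partitioning $\{1,\dots,2m\}$ into a bounded number of arithmetic-progression blocks (the ``small'', ``middle'', and ``large'' position ranges) and matching each block against the positions produced by a corresponding family of pairs, so that the verification reduces to a handful of index computations independent of $k$ rather than anything that grows with $k$.

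The main obstacle is the sufficiency half: there is no easy way to build a Skolem sequence of order $m+4$ from one of order $m$ by concatenation, so one genuinely needs the closed-form pairings, and getting the ``boundary'' pairs right (the few pairs that deviate from the generic pattern) is the fiddly part; once a correct construction is in hand, the remaining checks are routine. An alternative to presenting the construction in full would be simply to cite \cite{skolem1957}, since both the statement and the base constructions are classical, but I would prefer to include at least one explicit family so that the Skolem-circle results later in the paper can be stated self-containedly.
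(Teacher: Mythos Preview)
The paper does not supply its own proof of this theorem: it is stated with the citation \cite{skolem1957} and used as a classical background result, so there is nothing in the paper to compare your argument against. Your proposal is the standard proof: the parity/counting argument for necessity is correct as written, and for sufficiency Skolem's explicit pairings (split into the cases $m=4k$ and $m=4k+1$) are exactly what one cites or reproduces. Since the paper is content to cite \cite{skolem1957} outright, including the full constructions would go beyond what the paper itself does; if you want self-containment, your plan of exhibiting one explicit family and verifying it blockwise is the right level of detail.
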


In \cite{mendelsohn1991} it is shown that  Skolem labeled cycle graphs exist for every cycle on $2m$ nodes if and only if 
 $m\geq 8$
and   $m\equiv 0, 1\mod 4$.  Those Skolem circles which are weak Skolem lableld graphs (and not Skolem labeld graphs) can be constructed by wrapping a Skolem sequence around the circle graph, thus the possible orders of Skolem circles are the same as the possible orders of Skolem sequences.

\begin{thm}
A Skolem circle of order $m$ exists if and only if $m\equiv 0, 1\mod 4$.
\end{thm}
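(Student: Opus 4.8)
The plan is to prove the two implications separately, leaning on Theorem~\ref{thm:skolem} for sufficiency and on a short parity count for necessity. For the ``if'' direction, suppose $m \equiv 0, 1 \pmod 4$. Theorem~\ref{thm:skolem} supplies a Skolem sequence $(s_1,\dots,s_{2m})$ of order $m$; record the two occurrences of each symbol $i$ as a pair $(a_i,b_i)$ with $a_i<b_i$ and $b_i-a_i=i$. I would place these $2m$ symbols on the $2m$ nodes of the cycle $C_{2m}$ in order and check the two defining conditions. Condition~1 is immediate. For condition~2 I must verify that the two nodes carrying label $i$ are at cyclic distance exactly $i$: their positions differ by $i$ one way round and by $2m-i$ the other way, so their distance in $C_{2m}$ is $\min(i,2m-i)$, and this equals $i$ precisely because $1\le i\le m$ forces $i\le 2m-i$. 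Hence the wrapped sequence is a Skolem circle of order $m$ (with $m=0$ the trivial circle and $m=1$ the $2$-cycle with both nodes labelled $1$).

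For the ``only if'' direction, suppose a Skolem circle of order $m$ exists and label the nodes of $C_{2m}$ by the residues $1,\dots,2m$. For each $i\in\{1,\dots,m\}$ let $p_i,q_i$ be the positions of the two nodes carrying label $i$. Since these nodes are at cyclic distance $i$, the difference $q_i-p_i$ is congruent to $+i$ or $-i$ modulo $2m$, and in either case $q_i-p_i\equiv i\pmod 2$ because $2m$ is even and $i\equiv -i\pmod 2$; hence $p_i+q_i\equiv i\pmod 2$. Summing over $i$ and using that $\{p_i,q_i : 1\le i\le m\}$ is exactly $\{1,2,\dots,2m\}$ gives
\[
 m(2m+1)\;=\;\sum_{i=1}^{m}(p_i+q_i)\;\equiv\;\sum_{i=1}^{m} i\;=\;\frac{m(m+1)}{2}\pmod 2 .
\]
A short check of the four residue classes of $m$ modulo $4$ shows this congruence holds exactly when $m\equiv 0$ or $1\pmod 4$, which is the desired conclusion.

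The argument is essentially routine, and I expect no serious obstacle; the only points needing care are (i) in the sufficiency direction, confirming that the wrap never overshoots, i.e.\ that $\min(i,2m-i)=i$ for every symbol $i\le m$, and (ii) in the necessity direction, handling the sign ambiguity $q_i-p_i\equiv\pm i$, which is why the count is pushed down to arithmetic modulo $2$, where the two cases coincide. An alternative route for necessity would be to combine the classification of Skolem labelled cycle graphs from \cite{mendelsohn1991} with a separate treatment of the small orders that are weak labellings, but the self-contained parity count above is cleaner and moreover does not require the hypothesis $m\ge 8$.
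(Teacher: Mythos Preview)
Your argument is correct. For sufficiency you do exactly what the paper does: wrap a Skolem sequence of order $m$ (supplied by Theorem~\ref{thm:skolem}) around $C_{2m}$, with the only care being the check that $\min(i,2m-i)=i$ for $i\le m$.

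For necessity you take a different route from the paper. The paper argues by dichotomy: a Skolem circle is either a (strict) Skolem labelled cycle, which by \cite{mendelsohn1991} forces $m\ge 8$ and $m\equiv 0,1\pmod 4$, or else it has a removable edge and so unrolls to a Skolem sequence, forcing $m\equiv 0,1\pmod 4$ by Theorem~\ref{thm:skolem}. You instead give a self-contained parity count, pushing the sign ambiguity $q_i-p_i\equiv\pm i\pmod{2m}$ down to arithmetic modulo $2$ where it disappears, and then reading off the constraint $m(2m+1)\equiv m(m+1)/2\pmod 2$. Your approach has the advantage of being elementary and not depending on the classification in \cite{mendelsohn1991}; the paper's route has the advantage of simultaneously explaining the structural picture (every Skolem circle either is a genuine Skolem labelling or contains a Skolem sequence), which is used later in Section~\ref{sec:structure}. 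Either way the theorem follows.
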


Some Skolem circles contain several Skolem sequences.  The Skolem circle in Figure \ref{fig:6sequences} contains six distinct Skolem sequences.
There are three Anti-clockwise sequences $(1, 1, 4, 2, 3, 2, 4, 3)$, $(4, 2, 3, 2, 4, 3, 1, 1)$, $(2, 3, 2, 4, 3, 1, 1, 4)$,
and three clockwise sequences $(3, 4, 2, 3, 2, 4, 1, 1)$, $(1, 1, 3, 4, 2, 3, 2, 4)$,\\
 $(4, 1, 1, 3, 4, 2, 3, 2)$.

A circle is a highly symmetric arrangement of cells.  Fixing a positional labelling on a circle can be useful to discuss aspects of the circle.  A positional labelling of the circle  contains the symbols $\{1,2,\dots, 2m\}$ in lexicographic order, beginning with any vertex, and moving either clockwise or anticlockwise. 
Allowing travel around a circle clockwise or anti clockwise and allowing beginning vertex we consider Skolem sequences to be { \em circle equivalent } if they form  the same Skolem circle  allowing for cyclic shifts and reversal of direction. Skolem circles $C_1=\{(a_1,b_1),(a_2,b_2),\dots,(a_m,b_m)\}$ and $C_2=\{(\alpha_1,\beta_1),(\alpha_2,\beta_2),\dots,(\alpha_m,\beta_m)\}$ are circle equivalent if there exists $x$ such that $(a_i+x,b_i+x)=(\alpha_i,\beta_i)\mod 2m$ for all $1\leq i\leq m$ or $(x-a_i,x-b_i)=(\alpha_i,\beta_i)\mod 2m$ for all $1\leq i\leq m$.    

 For ease of comparing Skolem circles we consider a Skolem circle to be given the { \em standard positional labelling} if $(a_1,b_1)=(1,2)$ and $3\leq a_2\leq m$.  
When drawing graphs will represent the labeling with vertex $1$ at an angle of $0$ from the horizontal, and  vertex at an angle of $x\frac{\pi}{m}$ being given label $x+1$.  The Skolem circles in Figures \ref{fig:6sequences} and \ref{fig:unbreakable} are both shown in the standard positional labeling.  The standard positional labeling also allows representation of a Skolem circle as a sequence.  The Skolem circle of Figure \ref{fig:6sequences} may be represented as the sequence $(1, 1, 4, 2, 3, 2, 4, 3)$.  The six Skolem sequences $(1, 1, 4, 2, 3, 2, 4, 3)$, $(4, 2, 3, 2, 4, 3, 1, 1)$, $(2, 3, 2, 4, 3, 1, 1, 4)$, $(3, 4, 2, 3, 2, 4, 1, 1)$,\\ $(1, 1, 3, 4, 2, 3, 2, 4)$, $(4, 1, 1, 3, 4, 2, 3, 2)$ are circle equivalent.  This notion of circle equivalence will be used in Section \ref{sec:enumerate} to put a lower bound on the number of distinct Skolem circles.

There exists Skolem circles which are Skolem labelling of a cycle graph, and hence cannot be constructed by wrapping a Skolem sequence around a cycle. Figure \ref{fig:unbreakable} shows a Skolem labelling of  a cycle graph with $16$ vertices.
\begin{figure}[h]
\begin{center}
\begin{tikzpicture}
\def \n {16}
\def \radius {3cm}
\def \margin {8} 

\def \s{0} 
\def \e{1}
  \node[draw, circle] at ({360/\n * (\s)}:\radius) {\e}; 
   \draw[-, >=latex]   ({360/\n * (\s-1.1)+\margin}:\radius) 
    arc ({360/\n * (\s-1.1)+\margin}:{360/\n * (\s+.1)-\margin}:\radius);
\def \s{1} 
\def \e{1}
  \node[draw, circle] at ({360/\n * (\s)}:\radius) {\e}; 
   \draw[-, >=latex]   ({360/\n * (\s-1.1)+\margin}:\radius) 
    arc ({360/\n * (\s-1.1)+\margin}:{360/\n * (\s+.1)-\margin}:\radius);
\def \s{2} 
\def \e{4}
  \node[draw, circle] at ({360/\n * (\s)}:\radius) {\e}; 
   \draw[-, >=latex]   ({360/\n * (\s-1.1)+\margin}:\radius) 
    arc ({360/\n * (\s-1.1)+\margin}:{360/\n * (\s+.1)-\margin}:\radius);
\def \s{3} 
\def \e{8}
  \node[draw, circle] at ({360/\n * (\s)}:\radius) {\e}; 
   \draw[-, >=latex]   ({360/\n * (\s-1.1)+\margin}:\radius) 
    arc ({360/\n * (\s-1.1)+\margin}:{360/\n * (\s+.1)-\margin}:\radius);
\def \s{4} 
\def \e{7}
  \node[draw, circle] at ({360/\n * (\s)}:\radius) {\e}; 
   \draw[-, >=latex]   ({360/\n * (\s-1.1)+\margin}:\radius) 
    arc ({360/\n * (\s-1.1)+\margin}:{360/\n * (\s+.1)-\margin}:\radius);
\def \s{5} 
\def \e{5}
  \node[draw, circle] at ({360/\n * (\s)}:\radius) {\e}; 
   \draw[-, >=latex]   ({360/\n * (\s-1.1)+\margin}:\radius) 
    arc ({360/\n * (\s-1.1)+\margin}:{360/\n * (\s+.1)-\margin}:\radius);
\def \s{6} 
\def \e{4}
  \node[draw, circle] at ({360/\n * (\s)}:\radius) {\e}; 
    \draw[-, >=latex]   ({360/\n * (\s-1.1)+\margin}:\radius) 
    arc ({360/\n * (\s-1.1)+\margin}:{360/\n * (\s+.1)-\margin}:\radius);
\def \s{7} 
\def \e{2}
  \node[draw, circle] at ({360/\n * (\s)}:\radius) {\e}; 
   \draw[-, >=latex]   ({360/\n * (\s-1.1)+\margin}:\radius) 
    arc ({360/\n * (\s-1.1)+\margin}:{360/\n * (\s+.1)-\margin}:\radius);
\def \s{8} 
\def \e{6}
  \node[draw, circle] at ({360/\n * (\s)}:\radius) {\e}; 
   \draw[-, >=latex]   ({360/\n * (\s-1.1)+\margin}:\radius) 
    arc ({360/\n * (\s-1.1)+\margin}:{360/\n * (\s+.1)-\margin}:\radius);
\def \s{9} 
\def \e{2}
  \node[draw, circle] at ({360/\n * (\s)}:\radius) {\e}; 
   \draw[-, >=latex]   ({360/\n * (\s-1.1)+\margin}:\radius) 
    arc ({360/\n * (\s-1.1)+\margin}:{360/\n * (\s+.1)-\margin}:\radius);
\def \s{10} 
\def \e{5}
  \node[draw, circle] at ({360/\n * (\s)}:\radius) {\e}; 
   \draw[-, >=latex]   ({360/\n * (\s-1.1)+\margin}:\radius) 
    arc ({360/\n * (\s-1.1)+\margin}:{360/\n * (\s+.1)-\margin}:\radius);
\def \s{11} 
\def \e{8}
  \node[draw, circle] at ({360/\n * (\s)}:\radius) {\e}; 
   \draw[-, >=latex]   ({360/\n * (\s-1.1)+\margin}:\radius) 
    arc ({360/\n * (\s-1.1)+\margin}:{360/\n * (\s+.1)-\margin}:\radius);
\def \s{12} 
\def \e{3}
  \node[draw, circle] at ({360/\n * (\s)}:\radius) {\e}; 
   \draw[-, >=latex]   ({360/\n * (\s-1.1)+\margin}:\radius) 
    arc ({360/\n * (\s-1.1)+\margin}:{360/\n * (\s+.1)-\margin}:\radius);
\def \s{13} 
\def \e{7}
  \node[draw, circle] at ({360/\n * (\s)}:\radius) {\e}; 
   \draw[-, >=latex]   ({360/\n * (\s-1.1)+\margin}:\radius) 
    arc ({360/\n * (\s-1.1)+\margin}:{360/\n * (\s+.1)-\margin}:\radius);
\def \s{14} 
\def \e{6}
  \node[draw, circle] at ({360/\n * (\s)}:\radius) {\e}; 
   \draw[-, >=latex]   ({360/\n * (\s-1.1)+\margin}:\radius) 
    arc ({360/\n * (\s-1.1)+\margin}:{360/\n * (\s+.1)-\margin}:\radius);
\def \s{15} 
\def \e{3}
  \node[draw, circle] at ({360/\n * (\s)}:\radius) {\e}; 
   \draw[-, >=latex]   ({360/\n * (\s-1.1)+\margin}:\radius) 
    arc ({360/\n * (\s-1.1)+\margin}:{360/\n * (\s+.1)-\margin}:\radius);
\end{tikzpicture}
\end{center}
\caption{A Skolem labelling of a cycle graph on 16 vertices. \label{fig:unbreakable}}
\end{figure}
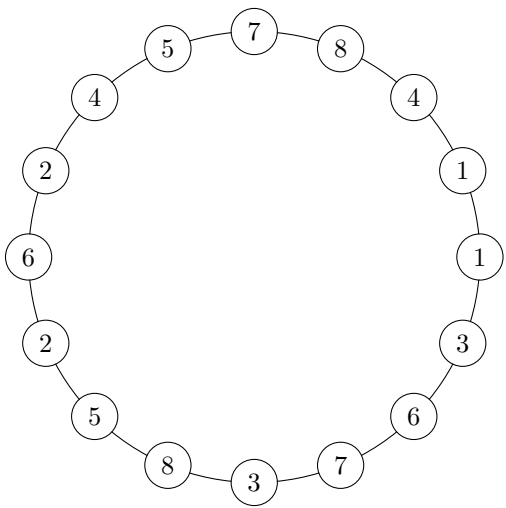

\section{Structures of Skolem Circles \label{sec:structure}}
Skolem sequences can contain subsequences which are Skolem-type sequences or extended Skolem type sequences.  This is apparent when regarding a Skolem circle as a weak Skolem labeled cycle graph.  Removing appropriate edges from a weak Skolem labeled graph creates a Skolem labeled graph, although this graph may no longer be a cycle, and if more than one edge is removed the Skolem labeled graph is not connected (see Figure \ref{fig:disconnected}).

\begin{figure}[t]

\begin{multicols}{3}

\begin{tikzpicture}
\def \n {8}
\def \radius {2cm}
\def \margin {8} 
\def \s{0} 
\def \e{1}
  \node[draw, circle] at ({360/\n * (\s)}:\radius) {\e}; 
 \draw[-, >=latex]   ({360/\n * (\s-1)+\margin}:\radius) 
    arc ({360/\n * (\s-1)+\margin}:{360/\n * (\s)-\margin}:\radius);
\def \s{1} 
\def \e{1}
  \node[draw, circle] at ({360/\n * (\s)}:\radius) {\e}; 
 \draw[ , >=latex]   ({360/\n * (\s-1)+\margin}:\radius) 
    arc ({360/\n * (\s-1)+\margin}:{360/\n * (\s)-\margin}:\radius);
\def \s{2} 
\def \e{4}
  \node[draw, circle] at ({360/\n * (\s)}:\radius) {\e}; 
    arc ({360/\n * (\s-1)+\margin}:{360/\n * (\s)-\margin}:\radius);
\def \s{3} 
\def \e{2}
  \node[draw, circle] at ({360/\n * (\s)}:\radius) {\e}; 
 \draw[-, >=latex]   ({360/\n * (\s-1)+\margin}:\radius) 
    arc ({360/\n * (\s-1)+\margin}:{360/\n * (\s)-\margin}:\radius);
\def \s{4} 
\def \e{3}
  \node[draw, circle] at ({360/\n * (\s)}:\radius) {\e}; 
 \draw[-, >=latex]   ({360/\n * (\s-1)+\margin}:\radius) 
    arc ({360/\n * (\s-1)+\margin}:{360/\n * (\s)-\margin}:\radius);
\def \s{5} 
\def \e{2}
  \node[draw, circle] at ({360/\n * (\s)}:\radius) {\e}; 
 \draw[-, >=latex]   ({360/\n * (\s-1)+\margin}:\radius) 
    arc ({360/\n * (\s-1)+\margin}:{360/\n * (\s)-\margin}:\radius);
\def \s{6} 
\def \e{4}
  \node[draw, circle] at ({360/\n * (\s)}:\radius) {\e}; 
  \draw[-, >=latex]   ({360/\n * (\s-1)+\margin}:\radius) 
    arc ({360/\n * (\s-1)+\margin}:{360/\n * (\s)-\margin}:\radius);
\def \s{7} 
\def \e{3}
  \node[draw, circle] at ({360/\n * (\s)}:\radius) {\e}; 
  \draw[-, >=latex]   ({360/\n * (\s-1)+\margin}:\radius) 
    arc ({360/\n * (\s-1)+\margin}:{360/\n * (\s)-\margin}:\radius);
\end{tikzpicture}

\begin{tikzpicture}
\def \n {8}
\def \radius {2cm}
\def \margin {8} 
\def \s{0} 
\def \e{1}
  \node[draw, circle] at ({360/\n * (\s)}:\radius) {\e}; 
 \draw[-, >=latex]   ({360/\n * (\s-1)+\margin}:\radius) 
    arc ({360/\n * (\s-1)+\margin}:{360/\n * (\s)-\margin}:\radius);
\def \s{1} 
\def \e{1}
  \node[draw, circle] at ({360/\n * (\s)}:\radius) {\e}; 
 \draw[ , >=latex]   ({360/\n * (\s-1)+\margin}:\radius) 
    arc ({360/\n * (\s-1)+\margin}:{360/\n * (\s)-\margin}:\radius);
\def \s{2} 
\def \e{4}
  \node[draw, circle] at ({360/\n * (\s)}:\radius) {\e}; 
 \draw[-, >=latex]   ({360/\n * (\s-1)+\margin}:\radius) 
    arc ({360/\n * (\s-1)+\margin}:{360/\n * (\s)-\margin}:\radius);
\def \s{3} 
\def \e{2}
  \node[draw, circle] at ({360/\n * (\s)}:\radius) {\e}; 
    arc ({360/\n * (\s-1)+\margin}:{360/\n * (\s)-\margin}:\radius);
\def \s{4} 
\def \e{3}
  \node[draw, circle] at ({360/\n * (\s)}:\radius) {\e}; 
 \draw[-, >=latex]   ({360/\n * (\s-1)+\margin}:\radius) 
    arc ({360/\n * (\s-1)+\margin}:{360/\n * (\s)-\margin}:\radius);
\def \s{5} 
\def \e{2}
  \node[draw, circle] at ({360/\n * (\s)}:\radius) {\e}; 
 \draw[-, >=latex]   ({360/\n * (\s-1)+\margin}:\radius) 
    arc ({360/\n * (\s-1)+\margin}:{360/\n * (\s)-\margin}:\radius);
\def \s{6} 
\def \e{4}
  \node[draw, circle] at ({360/\n * (\s)}:\radius) {\e}; 
  \draw[-, >=latex]   ({360/\n * (\s-1)+\margin}:\radius) 
    arc ({360/\n * (\s-1)+\margin}:{360/\n * (\s)-\margin}:\radius);
\def \s{7} 
\def \e{3}
  \node[draw, circle] at ({360/\n * (\s)}:\radius) {\e}; 
  \draw[-, >=latex]   ({360/\n * (\s-1)+\margin}:\radius) 
    arc ({360/\n * (\s-1)+\margin}:{360/\n * (\s)-\margin}:\radius);
\end{tikzpicture}

\begin{tikzpicture}
\def \n {8}
\def \radius {2cm}
\def \margin {8} 
\def \s{0} 
\def \e{1}
  \node[draw, circle] at ({360/\n * (\s)}:\radius) {\e}; 
    arc ({360/\n * (\s-1)+\margin}:{360/\n * (\s)-\margin}:\radius);
\def \s{1} 
\def \e{1}
  \node[draw, circle] at ({360/\n * (\s)}:\radius) {\e}; 
 \draw[ , >=latex]   ({360/\n * (\s-1)+\margin}:\radius) 
    arc ({360/\n * (\s-1)+\margin}:{360/\n * (\s)-\margin}:\radius);
\def \s{2} 
\def \e{4}
  \node[draw, circle] at ({360/\n * (\s)}:\radius) {\e}; 
 \draw[-, >=latex]   ({360/\n * (\s-1)+\margin}:\radius) 
    arc ({360/\n * (\s-1)+\margin}:{360/\n * (\s)-\margin}:\radius);
\def \s{3} 
\def \e{2}
  \node[draw, circle] at ({360/\n * (\s)}:\radius) {\e}; 
 \draw[-, >=latex]   ({360/\n * (\s-1)+\margin}:\radius) 
    arc ({360/\n * (\s-1)+\margin}:{360/\n * (\s)-\margin}:\radius);
\def \s{4} 
\def \e{3}
  \node[draw, circle] at ({360/\n * (\s)}:\radius) {\e}; 
 \draw[-, >=latex]   ({360/\n * (\s-1)+\margin}:\radius) 
    arc ({360/\n * (\s-1)+\margin}:{360/\n * (\s)-\margin}:\radius);
\def \s{5} 
\def \e{2}
  \node[draw, circle] at ({360/\n * (\s)}:\radius) {\e}; 
 \draw[-, >=latex]   ({360/\n * (\s-1)+\margin}:\radius) 
    arc ({360/\n * (\s-1)+\margin}:{360/\n * (\s)-\margin}:\radius);
\def \s{6} 
\def \e{4}
  \node[draw, circle] at ({360/\n * (\s)}:\radius) {\e}; 
  \draw[-, >=latex]   ({360/\n * (\s-1)+\margin}:\radius) 
    arc ({360/\n * (\s-1)+\margin}:{360/\n * (\s)-\margin}:\radius);
\def \s{7} 
\def \e{3}
  \node[draw, circle] at ({360/\n * (\s)}:\radius) {\e}; 
  \draw[-, >=latex]   ({360/\n * (\s-1)+\margin}:\radius) 
    arc ({360/\n * (\s-1)+\margin}:{360/\n * (\s)-\margin}:\radius);
\end{tikzpicture}

\end{multicols}

\caption{Examining the  Skolem circle of Figure \ref{fig:6sequences}, there are three different edges that could be removed whilst maintaining a Skolem Labeling.   Hence this Skolem circle contains 6 distinct Skolem sequences. \label{fig:edgesremoved}}
\end{figure}
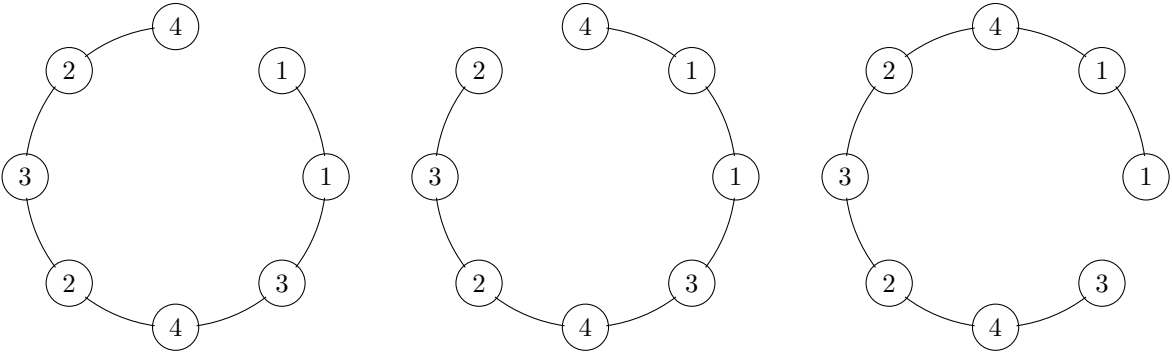

Figure \label{fig:edgesremoved} shows a Skolem circle with three different edges removed, thus creating 6 different Skolem sequences.

Let $C$ be a Skolem circle of order $m$ to which $j$ different edges may be removed to create 2j different Skolem sequences, then $C$ is a $j$-{\em edge- removable} Skolem circle.  A circle with 3 removable edges is shown in Figure \ref{fig:edgesremoved}.  When removing all edges at once the resulting graph may be a Skolem labeled graph, or as in case of Figure \ref{fig:edgesremoved}, may from a extended Skolem Labeled graph by treating the symbol $m$ as a null.

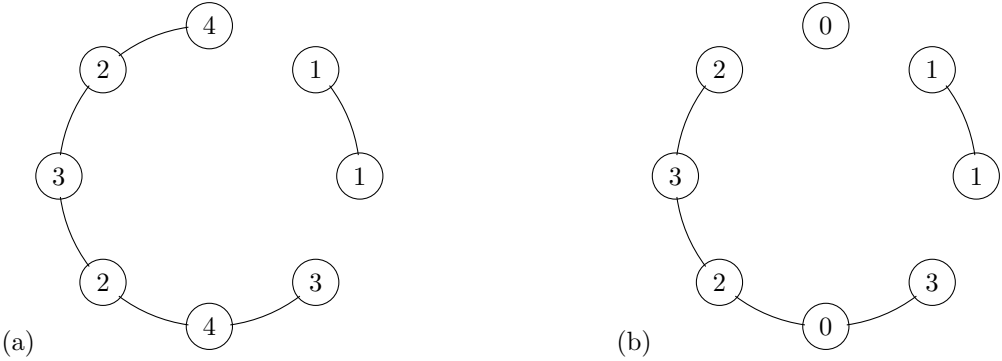
\begin{figure}[t]

\begin{multicols}{2}

(a)\begin{tikzpicture}
\def \n {8}
\def \radius {2cm}
\def \margin {8} 
\def \s{0} 
\def \e{1}
  \node[draw, circle] at ({360/\n * (\s)}:\radius) {\e}; 
    arc ({360/\n * (\s-1)+\margin}:{360/\n * (\s)-\margin}:\radius);
\def \s{1} 
\def \e{1}
  \node[draw, circle] at ({360/\n * (\s)}:\radius) {\e}; 
 \draw[ , >=latex]   ({360/\n * (\s-1)+\margin}:\radius) 
    arc ({360/\n * (\s-1)+\margin}:{360/\n * (\s)-\margin}:\radius);
\def \s{2} 
\def \e{4}
  \node[draw, circle] at ({360/\n * (\s)}:\radius) {\e}; 
    arc ({360/\n * (\s-1)+\margin}:{360/\n * (\s)-\margin}:\radius);
\def \s{3} 
\def \e{2}
  \node[draw, circle] at ({360/\n * (\s)}:\radius) {\e}; 
 \draw[-, >=latex]   ({360/\n * (\s-1)+\margin}:\radius) 
    arc ({360/\n * (\s-1)+\margin}:{360/\n * (\s)-\margin}:\radius);
\def \s{4} 
\def \e{3}
  \node[draw, circle] at ({360/\n * (\s)}:\radius) {\e}; 
 \draw[-, >=latex]   ({360/\n * (\s-1)+\margin}:\radius) 
    arc ({360/\n * (\s-1)+\margin}:{360/\n * (\s)-\margin}:\radius);
\def \s{5} 
\def \e{2}
  \node[draw, circle] at ({360/\n * (\s)}:\radius) {\e}; 
 \draw[-, >=latex]   ({360/\n * (\s-1)+\margin}:\radius) 
    arc ({360/\n * (\s-1)+\margin}:{360/\n * (\s)-\margin}:\radius);
\def \s{6} 
\def \e{4}
  \node[draw, circle] at ({360/\n * (\s)}:\radius) {\e}; 
  \draw[-, >=latex]   ({360/\n * (\s-1)+\margin}:\radius) 
    arc ({360/\n * (\s-1)+\margin}:{360/\n * (\s)-\margin}:\radius);
\def \s{7} 
\def \e{3}
  \node[draw, circle] at ({360/\n * (\s)}:\radius) {\e}; 
  \draw[-, >=latex]   ({360/\n * (\s-1)+\margin}:\radius) 
    arc ({360/\n * (\s-1)+\margin}:{360/\n * (\s)-\margin}:\radius);
\end{tikzpicture}

(b)\begin{tikzpicture}
\def \n {8}
\def \radius {2cm}
\def \margin {8} 
\def \s{0} 
\def \e{1}
  \node[draw, circle] at ({360/\n * (\s)}:\radius) {\e}; 
    arc ({360/\n * (\s-1)+\margin}:{360/\n * (\s)-\margin}:\radius);
\def \s{1} 
\def \e{1}
  \node[draw, circle] at ({360/\n * (\s)}:\radius) {\e}; 
 \draw[ , >=latex]   ({360/\n * (\s-1)+\margin}:\radius) 
    arc ({360/\n * (\s-1)+\margin}:{360/\n * (\s)-\margin}:\radius);
\def \s{2} 
\def \e{0}
  \node[draw, circle] at ({360/\n * (\s)}:\radius) {\e}; 
    arc ({360/\n * (\s-1)+\margin}:{360/\n * (\s)-\margin}:\radius);
\def \s{3} 
\def \e{2}
  \node[draw, circle] at ({360/\n * (\s)}:\radius) {\e}; 
    arc ({360/\n * (\s-1)+\margin}:{360/\n * (\s)-\margin}:\radius);
\def \s{4} 
\def \e{3}
  \node[draw, circle] at ({360/\n * (\s)}:\radius) {\e}; 
 \draw[-, >=latex]   ({360/\n * (\s-1)+\margin}:\radius) 
    arc ({360/\n * (\s-1)+\margin}:{360/\n * (\s)-\margin}:\radius);
\def \s{5} 
\def \e{2}
  \node[draw, circle] at ({360/\n * (\s)}:\radius) {\e}; 
 \draw[-, >=latex]   ({360/\n * (\s-1)+\margin}:\radius) 
    arc ({360/\n * (\s-1)+\margin}:{360/\n * (\s)-\margin}:\radius);
\def \s{6} 
\def \e{0}
  \node[draw, circle] at ({360/\n * (\s)}:\radius) {\e}; 
  \draw[-, >=latex]   ({360/\n * (\s-1)+\margin}:\radius) 
    arc ({360/\n * (\s-1)+\margin}:{360/\n * (\s)-\margin}:\radius);
\def \s{7} 
\def \e{3}
  \node[draw, circle] at ({360/\n * (\s)}:\radius) {\e}; 
  \draw[-, >=latex]   ({360/\n * (\s-1)+\margin}:\radius) 
    arc ({360/\n * (\s-1)+\margin}:{360/\n * (\s)-\margin}:\radius);
\end{tikzpicture}

\end{multicols}
\caption{(a) Removing two edges from the Skolem circle of Figure \ref{fig:6sequences} creates a Skolem labeled graph which is no connected.  (b) Replacing the symbol $4$ with a null, the Skolem circle is partitioned into 3 sequences of Skolem type. \label{fig:disconnected}  }
\end{figure}

 The edges of the Skolem labeled graph that would removed to partition a Skolem circle into    Skolem-type (or extended Skolem-type) subsequences is called a {\em removable edge}.

  At each removable edge two Skolem sequences are constructed (clockwise and anti-clockwise) see Figure \ref{fig:edgesremoved}. All Skolem sequences create a Skolem circle with at least one break point.  The Skolem circle in Figure \ref{fig:6sequences} is $3$ breakable, the anticlockwise subsequences are $(1,1),(4),(2,3,2,4,3)$. The Skolem circle in Figure \ref{fig:unbreakable} has $0$ removable edges, it is Skolem labelling of a graph.

Note that removing an edge from a Skolem circle creates two Skolem sequences, clockwise and anti clockwise. These observations lead to the following result.
\begin{lem}A  Skolem circle with $j$ removable edges represents a set of $2j$ circle equivalent Skolem sequences.
\end{lem}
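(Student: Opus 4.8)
The plan is to set up a clean correspondence between the Skolem sequences ``contained'' in $C$ and the removable edges of $C$, and then to prove that the resulting sequences are pairwise distinct by showing that a Skolem circle of order $m\ge 2$ has no nontrivial symmetry.

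First I would make precise the passage from the circle to a sequence. A \emph{reading} of $C$ is a choice of a starting vertex together with a direction of travel; there are $4m$ of them, and each yields a length-$2m$ sequence by reading off the labels in order. A reading produces a Skolem sequence exactly when the edge lying immediately before its start vertex is a removable edge: deleting that edge turns $C$ into a path on which, by the definition of removable edge, every pair of equally labelled vertices sits at path-distance equal to its label, which is precisely the Skolem condition; conversely, if a reading gives a Skolem sequence, wrapping it back onto a cycle recovers $C$ with that edge deleted, so the edge is removable. For a fixed removable edge $e$ there are exactly two readings whose preceding edge is $e$ (start at one endpoint of $e$ travelling away from $e$, or start at the other endpoint travelling the other way), and these give a Skolem sequence and its reversal. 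Hence there are exactly $2j$ readings producing Skolem sequences. Circle equivalence of all of them is then immediate from the definition: each such sequence, wrapped onto a cycle, reproduces $C$ up to a cyclic shift and a reflection, which is exactly the freedom allowed in the definition of circle equivalence, so all $2j$ sequences are circle equivalent to $C$ and hence to one another.

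The substance of the proof is that these $2j$ readings yield $2j$ \emph{distinct} sequences. Two readings give the same sequence if and only if they differ by an automorphism of $C$ inside the dihedral group $D_{2m}$ acting on the $2m$ vertices, so it suffices to show $\mathrm{Aut}(C)$ is trivial when $m\ge 2$. Such an automorphism is a graph automorphism of the cycle, hence preserves distances, hence fixes setwise the unique pair of vertices carrying each label. If it is a rotation by $k$, then $\{a_i+k,b_i+k\}\equiv\{a_i,b_i\}\pmod{2m}$ for all $i$; the branch $a_i+k\equiv a_i$ forces $k\equiv 0$, while the branch $a_i+k\equiv b_i$ forces $2k\equiv 0\pmod{2m}$, i.e.\ $k=m$, and then $b_i-a_i\equiv m$ for every $i$, contradicting that the labels are $1,\dots,m$. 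If it is a reflection, then each label-pair $\{a_i,b_i\}$ is either the pair of fixed vertices of the reflection (possible only for the single label equal to their distance) or is swapped by the reflection, giving a relation $a_i+b_i\equiv c\pmod{2m}$ with $c$ depending only on the reflection; combining this with $b_i-a_i\equiv i\pmod{2m}$ forces $i$ to have a fixed parity for all but (at most) that one label, which is impossible once $m\ge 2$ since $\{1,2\}\subseteq\{1,\dots,m\}$. Thus $\mathrm{Aut}(C)$ is trivial and the $2j$ readings are distinct; the degenerate order-$1$ case can be checked or excluded separately.

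I expect the rigidity statement — that a Skolem circle of order $m\ge 2$ has trivial automorphism group — to be the main obstacle, and within it the reflection case, where one must handle the two conjugacy classes of reflections of $C_{2m}$ (axis through two vertices versus through two edge-midpoints) separately and verify the parity arithmetic in each. It is also worth noting that the same parity computation shows a Skolem sequence of order $m\ge 2$ is never a palindrome, which is what guarantees that the two readings at a single removable edge — the clockwise one and its reversal — are already distinct from one another.
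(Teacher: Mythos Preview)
Your argument is correct and in fact considerably more careful than the paper's. In the paper this lemma is not given a formal proof at all: it is stated immediately after the observation that cutting a removable edge yields two Skolem sequences, one clockwise and one anticlockwise, and is treated as a direct consequence of that observation. In particular the paper never addresses the question of whether the $2j$ resulting sequences are pairwise distinct; it simply asserts that the circle ``represents a set of $2j$'' sequences.

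Your contribution is precisely to supply that missing distinctness argument, and you do so by the natural route: showing that a Skolem circle of order $m\ge 2$ has trivial automorphism group in $D_{2m}$. The rotation case and both reflection cases are handled correctly. For the reflections, your parity computation is right: writing the reflection as $x\mapsto c-x$, the swap condition $a_i+b_i\equiv c\pmod{2m}$ together with $b_i-a_i\equiv i\pmod{2m}$ gives $2b_i\equiv c+i\pmod{2m}$, forcing $i\equiv c\pmod 2$ for every label $i$ not sitting on the fixed-vertex pair; since at most the single label $m$ can occupy that pair, and $\{1,2\}\subseteq\{1,\dots,m\}$ with $1\ne m$ for $m\ge 2$, you obtain the contradiction. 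The same computation indeed shows that no Skolem sequence of order $m\ge 2$ is a palindrome, so the clockwise and anticlockwise readings at a single removable edge are already distinct. Thus your proof both recovers the paper's informal ``$j$ edges $\times$ $2$ directions $=2j$'' count and justifies the word ``set'' that the paper uses without comment.
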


\begin{ex}\label{ex:4}
Construct a $4$-edge-removeable  Skolem circle  by pasting together Langford Sequences.  Begin with the smallest Langford sequence $(1,1)$.  Next  a Langford sequence of defect $2$, for example $(3,4,2,3,2,4)$.   Then a Langford sequence of defect $5$, for example\\ $(13,11,9,7,5,12,10,8,6,5,7,9,11,13,6,8,10,12)$.\\
 Finally a Langford Sequence of defect $14$ for example\\ $(40,38,36,34,32,30,28,26,24,22,20,18,16,14,39,37,35,33,31,29,27,25,23,21,19,17,15,14,16,$\\ $18,20,22,24,26,28,30,32,34,36,38,40,15,17,19,21,23,25,27,29,31,33,35,37,39)$. 
These sequences can be concatenated together and wrapped around a cycle to form a $4$-edge-removable Skolem circle of order $40$.
\end{ex}

This example can be formalised into a general construction.

\begin{lem}\label{lem:gluing}
Let $\mathcal{S}=\{S_1,S_2,\dots,S_j\}$ be a set of $j$ Skolem-type sequences and let $X_i$ be the set of symbols in sequence $S_i$ for $i\in\{1,2,\dots, j\}$.  If the set 
\[\bigcup_iX_i=\{1,2,3,\dots m\}\quad\quad\textrm{and}\quad\quad \bigcap_iX_i=\emptyset\]
Then the concatenation of the sequences in $S$, wrapped around a cycle forms a $j$-edge-removable Skolem circle. 
\end{lem}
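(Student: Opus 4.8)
The plan is to wrap the concatenation $S=S_1S_2\cdots S_j$ around the cycle graph on $2m$ vertices, check that this is a Skolem circle of order $m$, and then show that the $j$ edges joining consecutive blocks are removable. Throughout I read the hypothesis $\bigcap_i X_i=\emptyset$ as the pairwise disjointness it must be meant to be, since a symbol lying in two of the $X_i$ would occur four times in $S$; so the $X_i$ genuinely partition $\{1,\dots,m\}$.

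\textbf{It is a Skolem circle of order $m$.} Each $S_i$ is Skolem-type on the symbol set $X_i$ and carries no null, so it has length $2|X_i|$; hence $S$ has length $\sum_i 2|X_i|=2m$ and its positions may be identified with the vertices of the cycle. Every $s\in\{1,\dots,m\}$ lies in exactly one $X_i$, so it occurs exactly twice in $S$: that is condition~1. For condition~2, the two copies of $s$ sit inside the block $S_i\ni s$ at linear positions $p$ and $p+s$ with $1\le p<p+s\le 2|X_i|$, so on the cycle they occupy positions $a$ and $a+s$ modulo $2m$; since $1\le s\le m$ their graph distance is $\min(s,2m-s)=s$, as required.

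\textbf{The $j$ seam edges are removable.} Call the edge joining the last vertex of $S_i$ to the first vertex of $S_{i+1}$ (indices modulo $j$) a \emph{seam edge}. The key point: for a symbol $s<m$ the unique shortest path between its two copies is the length-$s$ arc, and that arc uses only edges interior to the block containing $s$, since its vertices all have positions in $\{p,\dots,p+s\}$, which lies inside that block; for $s=m$ the two copies are antipodal on the cycle, so deleting any single edge leaves their distance equal to $m$. Therefore deleting one seam edge $e$ destroys no symbol's shortest arc, every equal-label pair keeps its prescribed distance, and what remains is a path on $2m$ vertices whose reading order is a Skolem sequence of order $m$. So each of the $j$ seam edges is removable, and reading the $j$ resulting paths in the two directions exhibits the $2j$ circle-equivalent Skolem sequences promised by the preceding lemma; deleting all $j$ seams simultaneously returns precisely the blocks $S_1,\dots,S_j$.

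\textbf{Exactly $j$: the main obstacle.} To obtain the count exactly, one must show that no block-interior edge is removable. Deleting the interior edge between positions $t$ and $t+1$ of $S_i$ preserves all distances if and only if no symbol of $X_i$ smaller than $m$ has its two copies straddling that edge, i.e.\ if and only if $S_i$ factors there into two shorter Skolem-type sequences. So there are exactly $j$ removable edges precisely when each $S_i$ is \emph{indecomposable}. This is the delicate point: as stated the lemma really wants the extra hypothesis that each $S_i$ is indecomposable, which does hold for the Langford blocks of Example~\ref{ex:4}; without it the construction gives only ``at least $j$'' removable edges, still enough for the lower-bound purposes of Section~\ref{sec:enumerate}. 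The rest is routine arithmetic with positions modulo $2m$.
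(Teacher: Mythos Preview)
The paper gives \emph{no} proof of Lemma~\ref{lem:gluing}; the lemma is stated and then the text immediately continues with ``Next we find appropriate Skolem type sequences.'' So there is nothing in the paper to compare your argument against.

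On its own merits your argument is correct and in fact more careful than the paper. The verification that the concatenation is a Skolem circle and that each of the $j$ seam edges is removable is routine and you handle it cleanly, including the small point that the symbol $m$ sits at antipodal vertices and is therefore insensitive to which single edge is deleted.

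Your third paragraph is the useful part: you correctly identify an imprecision in the lemma as stated. If some $S_i$ is itself decomposable into shorter Skolem-type sequences, then an interior edge of that block is also removable, and the circle has strictly more than $j$ removable edges. So without the extra hypothesis that each $S_i$ is indecomposable, the conclusion should really read ``at least $j$-edge-removable''. You are also right that this does not damage any later use of the lemma: the Langford blocks appearing in Example~\ref{ex:4} and Theorem~\ref{thm:9} are indecomposable, and in any case only a lower bound on the number of removable edges is ever needed downstream.
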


Next we find appropriate Skolem type sequences.

\begin{thm}\label{thm:langford} \cite{simpson1983}
A Langford Sequence of order $m$ and defect $d$ exists if and only if 
\begin{enumerate}
\item $m\geq 2d-1$
\item If $d$ is odd then $m\equiv 0,1\mod 4$, and if $d$ is even then $m\equiv 0,3\mod 4$.
\end{enumerate}
\end{thm}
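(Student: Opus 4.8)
The plan is to handle the two directions by quite different means: necessity is a short moment count, while sufficiency requires explicit families of sequences and is where essentially all of the work lies. For necessity I would write a putative Langford sequence as a set of pairs $\{(a_i,b_i):d\le i\le d+m-1\}$ with $b_i-a_i=i$ and $\{a_i\}\cup\{b_i\}=\{1,\dots,2m\}$, and extract the two identities $\sum_i(a_i+b_i)=\sum_{k=1}^{2m}k=m(2m+1)$ and $\sum_i(b_i-a_i)=\sum_{i=d}^{d+m-1}i=\tfrac{m(2d+m-1)}{2}$. Combining these pins down $\sum_i a_i=\tfrac14 m(3m-2d+3)$. Since this is an integer, $4\mid m(3m-2d+3)$; checking the cases $m\equiv 0,1,2,3\pmod 4$ against the parity of $d$ gives precisely condition (2) (in particular $m\equiv 2\pmod 4$ is excluded for every $d$).

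For condition (1) I would note that a position $j\le d$ cannot be a right-hand occurrence $b_i$, since its partner $a_i=j-i\le d-i\le 0$ would fall off the left end; symmetrically, no $a_i$ can exceed $2m-d$. Hence the $m$ opener positions are $m$ distinct elements of $\{1,\dots,2m-d\}$ that include all of $\{1,\dots,d\}$, so $\sum_i a_i\ge 1+2+\cdots+m=\tfrac12 m(m+1)$. Comparing this with the exact value above gives $3m-2d+3\ge 2(m+1)$, i.e.\ $m\ge 2d-1$, which is condition (1).

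For sufficiency one has to exhibit, for every admissible $(m,d)$, an actual sequence. I would organise the construction by the residue of $m\bmod 4$ together with the parity of $d$ — condition (2) leaves exactly four combinations — and anchor it on the defect-$1$ case, which is exactly Skolem's Theorem~\ref{thm:skolem}. From there one builds larger defects using (i) \emph{staircase} placements, in which the symbols of a block are laid out in arithmetic progression so that their two copies interleave, in the spirit of the Langford blocks of Example~\ref{ex:4}, and (ii) recursive gluing steps that attach a short Langford sequence to a longer one while shifting the defect; in each class the verification is the routine one of confirming that the closed-form occurrence positions form a permutation of $\{1,\dots,2m\}$ with the correct differences. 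The main obstacle is exactly this sufficiency direction, and within it the tight cases $m=2d-1$ and $m=2d$ together with a finite list of small orders, where the clean infinite families run out and one must supply ad hoc constructions (or a short computer search) and argue that no admissible pair is left uncovered; a complete such collection of constructions is carried out in \cite{simpson1983}.
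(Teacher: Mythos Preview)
The paper does not give its own proof of this theorem: it is quoted verbatim from Simpson~\cite{simpson1983} as a known result and used only as an input to Theorem~\ref{thm:9} and Algorithm~\ref{alg:construction}. So there is nothing in the paper to compare your argument against.

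That said, your necessity argument is correct. The identity $\sum_i a_i=\tfrac14 m(3m-2d+3)$ does force exactly the congruence conditions in~(2), and the lower bound $\sum_i a_i\ge \tfrac12 m(m+1)$ (which follows already from the $a_i$ being $m$ distinct positive integers, without needing the extra observation about $\{1,\dots,d\}$) yields~(1). For sufficiency you correctly identify that the real content lies in explicit constructions split by the residue of $m$ modulo~$4$ and the parity of $d$, with the boundary cases $m=2d-1$ and $m=2d$ requiring separate treatment; deferring the full case analysis to~\cite{simpson1983} is exactly what the paper itself does.
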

Knowing that appropriate Langford sequences exist a general construction can be described.

\begin{thm}\label{thm:9}
Let $j\in\mathbb{N}$, then there exists a  $j$-edge-removable Skolem circle of order $((3^{j+1})/2)-1$.
\end{thm}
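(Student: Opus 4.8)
The plan is to realise the circle as a concatenation of $j$ Langford sequences and then appeal to Lemma~\ref{lem:gluing}, exactly in the spirit of Example~\ref{ex:4}. For $i=1,2,\dots,j$ set $m_i=3^{i-1}$ and $d_i=(3^{i-1}+1)/2$, and let $X_i=\{d_i,d_i+1,\dots,d_i+m_i-1\}$ be the block of $m_i$ consecutive integers beginning at $d_i$. Since $d_i+m_i=\tfrac12(3^{i-1}+1)+3^{i-1}=\tfrac12(3^{i}+1)=d_{i+1}$, the blocks $X_1,\dots,X_j$ are consecutive and pairwise disjoint, $X_1$ starts at $d_1=1$, and $X_j$ ends at $d_{j+1}-1$, so their union is $\{1,2,\dots,m\}$ with $m=\sum_{i=1}^{j}3^{i-1}=(3^{j}-1)/2$. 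For $j=4$ these blocks are $\{1\}$, $\{2,3,4\}$, $\{5,\dots,13\}$, $\{14,\dots,40\}$, recovering the order-$40$ circle of Example~\ref{ex:4}; so the asserted order is $(3^{j}-1)/2$, matching that example.

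The substantive step is to produce, for each $i$, an honest Langford sequence $S_i$ of order $m_i$ and defect $d_i$, whose symbol set is then precisely $X_i$. This is where Theorem~\ref{thm:langford} (Simpson) is invoked, and it is the step that needs care. Condition~(1) of that theorem, $m_i\ge 2d_i-1$, holds with equality, since $2d_i-1=3^{i-1}=m_i$; thus the defect is pushed to the largest value allowed and there is no slack to spare. For condition~(2) one tracks $3^{i-1}$ modulo $8$: because $3^{2}\equiv1\bmod 8$, we get $3^{i-1}\equiv1\bmod 8$ when $i$ is odd and $3^{i-1}\equiv3\bmod 8$ when $i$ is even. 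In the odd case $d_i=(3^{i-1}+1)/2$ is odd while $m_i\equiv1\bmod 4$, which is the ``$d$ odd, $m\equiv0,1\bmod 4$'' branch of Theorem~\ref{thm:langford}; in the even case $d_i$ is even while $m_i\equiv3\bmod 4$, which is the ``$d$ even, $m\equiv0,3\bmod 4$'' branch. Hence every $S_i$ exists. It is precisely this $1,3,1,3,\dots$ alternation of residues that makes powers of $3$ the natural base here: most other bases would violate either the parity condition or the bound $m_i\ge 2d_i-1$.

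Finally, the family $\mathcal{S}=\{S_1,\dots,S_j\}$ satisfies $\bigcup_i X_i=\{1,\dots,m\}$ and $\bigcap_i X_i=\emptyset$, so Lemma~\ref{lem:gluing} guarantees that the concatenation $S_1S_2\cdots S_j$ wrapped around a cycle is a $j$-edge-removable Skolem circle; its order is $m=(3^{j}-1)/2$, as required. I expect the only real obstacle to be the simultaneous bookkeeping in the second paragraph --- in particular the tightness $m_i=2d_i-1$, which forces the exact value of $d_i$, and keeping the two parity cases straight. A minor additional point worth a sentence is that the circle should have exactly $j$ removable edges and no more: this is exactly what Lemma~\ref{lem:gluing} is formulated to deliver, but if one wanted to be scrupulous one would also arrange that no individual $S_i$ decomposes further into Skolem-type blocks, and one is free to choose each $S_i$ with that in mind, just as concrete choices were made in Example~\ref{ex:4}.
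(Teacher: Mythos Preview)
Your argument is correct and is essentially the paper's own proof: choose Langford sequences with parameters $m_i=3^{i-1}$, $d_i=(3^{i-1}+1)/2$, verify Simpson's conditions (you do this more carefully than the paper, via $3^{i-1}\bmod 8$), observe the symbol blocks tile $\{1,\dots,(3^{j}-1)/2\}$, and apply Lemma~\ref{lem:gluing}; the only cosmetic difference is that the paper phrases the tiling check as an induction while you do it directly. You are also right that the resulting order is $(3^{j}-1)/2$, consistent with Example~\ref{ex:4} ($j=4$, $m=40$), so the exponent in the theorem's displayed order is a typo.
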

\begin{proof}
 For $n \geq 0$ let $d_n=(3^n+1)/2$ and $m_n=3^n$.  With these parameters if $n$ is odd then $d_n$ is even and $m_n\equiv 3\mod 4$, and if $n$ is even then $d_n$ is odd and $m_n\equiv 1 \mod 4$.  Thus the conditions of Theorem \ref{thm:langford} are satisfied and therefore Langford sequences with these parameters exist.
 
 Let $S_n$ be a Langford sequence of defect $d_n$ and order $m_n$. Let $X_n$ be the set of symbols contained in $S_n$. Note that $X_0=\{1\}, X_1=\{2,3,4\}$. 
Hence
\[\bigcup_{i=0}^{1}X_i=\{1,2,3,4\}\quad\quad\textrm{and}\quad\quad \bigcap_{i=0}^1 X_i=\emptyset\]
Hence by Lemma \ref{lem:gluing} the concatenation of $S_0, S_1,$ forms a $2-$edge-removable Skolem circle.
Now assume for induction that the concatenation of $S_0,\dots ,S_{n-1}$ is a $n$-edge-removable Skolem circle with order $d_{n-1}+m_{n-1}-1$.
\[d_{n-1}+m_{n-1}-1=(3^{n-1}+1)/2+3^{n-1}-1= (3^{n-1}+1+2\times3^{n-1})/2-1=d_{n}-1\]
Hence 
\[\left(\bigcup_{i=0}^{n-1}X_i\right)\cup X_n=\{1,2,3,4\dots,d_{n}+m_n-1\}\quad\quad\textrm{and}\quad\quad\left( \bigcap_{i=0}^{n-1} X_i\right)\cap X_{n}=\emptyset\]

So by Lemma \ref{lem:gluing} the concatenation of  $S_0,\dots ,S_{n-1},S_{n}$ is a $(n+1)$-edge-removable Skolem circle.  The order of the Skolem circle is the largest value in $S_n$, and is hence $d_{n}+m_n-1=(3^{j+1})/2-1$.   By the principle of induction  the concatenation of  $S_0,\dots ,S_j$ is a $j+1$-edge-removable Skolem circle of order $(3^{j+1})/2-1$.
\end{proof}

We now give an algorithm for a specific construction of the required Langford sequences.
\begin{alg}\label{alg:construction}
To construct a Langford sequence of order $m=3^j$ and defect $d=(3^j+1)/2$.  Let $S_j$ be a sequence of order $m$, with $S_j[i]$ representing the $i^{th}$ position in the sequence.
\begin{itemize}
\item[] or $i\in[1,d]$ 
set $S_j[i]=d+m+1-2i$.
\item[] For $i\in[d+1,m]$ set  $S_j[i]=2m+d+1-2i$
\item [] For $i\in[m+1,3(m+1)/2$ set $S_j[i]=d+2(m+1-i)$

\end{itemize}

\end{alg}

Example \ref{ex:4} is   a $4$-breakable Skolem circle constructed using Algorithm \ref{alg:construction}.
\begin{proof}
If $x$ is the same parity as $d+m-1$, then for $y\in\{0,1,2,\dots, d-1\}$, let $x=d+m-1-2y$ and the Skolem pairs are $(y+1,m+d-y)$.  If $x$ is the opposite  parity as $d+m-1$ then for $z\in\{1,2,d-1\}$ let  $x=d+m-2z$ and the Skolem pairs are $(d+z, 2m-z+1)$.  

The gap between the Skolem pairs is  
\[m+d-y-(y+1)=d+m-1-2y=x\]
when $x$ is the same parity as $d+m-1$ and 
\[2m-z+1-(d-z)= 2m-d+1-2z=m+d-2z=x\]
 when the $x$ is the opposite parity.
\end{proof}

This shows that it is possible to construct a $j$-edge-removeable Skolem circle for any $j\in\mathbb{N}$, the Skolem circle just needs to be large enough.  The next result gives a bound for how large the circle must be.

\begin{thm}\label{thm:breakpointbound}
Let $S$ be a  Skolem circle of order $m$, then the maximum number of removable edges  is $O(\log m)$.
\end{thm}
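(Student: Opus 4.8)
The plan is to cut the $2m$-cycle at all $j$ removable edges at once, to observe that the $j$ resulting arcs behave like Skolem-type sequences on disjoint symbol sets, and then to run a doubling estimate on their sizes. We may assume $j\geq 1$, since otherwise the statement is vacuous.

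First I would isolate the structural fact behind the notion of a removable edge. If an edge $e$ lies strictly inside the short arc joining the two copies of a symbol $s<m$ -- an arc of length $s$, which is genuinely shorter than the complementary arc of length $2m-s$ because $s<m$ -- then deleting $e$ places those two copies at distance $2m-s\neq s$, so $e$ cannot be removable. Hence every removable edge avoids the short arc of every pair labelled by some $s<m$. Cutting the cycle at all $j$ removable edges therefore leaves each such pair, together with the short arc joining it, inside a single one of the resulting arcs $A_1,\dots,A_j$, at distance exactly $s$ within that arc; the only pair that can be split between two arcs is the one labelled $m$, whose two joining arcs both have length $m$. Writing $X_t$ for the set of symbols occurring (necessarily twice) in $A_t$, the $X_t$ are pairwise disjoint with union $\{1,\dots,m\}$, except that when the symbol $m$ is split the union is $\{1,\dots,m-1\}$ and the (at most two) arcs holding a lone copy of $m$ become extended Skolem-type sequences once that copy is read as a null. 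In every case $A_t$, restricted to its symbols, is a Skolem-type or extended Skolem-type sequence, so $|A_t|=2|X_t|$ or $2|X_t|+1$.

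Next I would extract the numerical inequality. For any nonempty $X_t$ with $M_t=\max X_t$, the two copies of $M_t$ sit at distance $M_t$ inside a sequence of length at most $2|X_t|+1$ in which at most one entry is a null, so $M_t\leq 2|X_t|$. Discard the at most two arcs with $X_t=\emptyset$ and relabel the remaining nonempty arcs so that $M_1<M_2<\dots<M_{j'}$ with $j'\geq j-2$. Since $X_1,\dots,X_k$ consist only of symbols $\leq M_k$, the partial sums $n_k:=|X_1|+\dots+|X_k|$ satisfy $n_k\leq M_k\leq 2|X_k|=2(n_k-n_{k-1})$, whence $n_k\geq 2n_{k-1}$. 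As $n_1\geq 1$ this gives $n_{j'}\geq 2^{\,j'-1}$, and $n_{j'}\leq m$ forces $j-2\leq j'\leq 1+\log_2 m$, i.e.\ $j=O(\log m)$. (By Theorem~\ref{thm:9} this is optimal up to the base of the logarithm and an additive constant.)

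The main obstacle is the bookkeeping around the symbol $m$: because the two arcs joining its copies are equally long, a removable edge may legitimately separate them, so the tidy ``partition into Skolem-type sequences'' claim must be downgraded to the statement allowing up to two extended pieces and possibly empty symbol sets, and one must verify -- as above -- that this only weakens the final estimate by an additive constant. Once the arc decomposition is set up correctly, the remaining counting is routine.
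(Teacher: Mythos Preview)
Your argument is correct and follows essentially the same route as the paper's proof: cut the circle at all removable edges, observe that each resulting arc is a (possibly extended) Skolem-type sequence on a disjoint symbol set, order the pieces by their maximum symbol, and run a doubling estimate using the inequality $M_t\le 2|X_t|$, with the symbol $m$ handled separately. The paper organises the doubling slightly differently---it proves by induction that the maximum symbol $t_i$ of the $i$th piece satisfies $t_i\ge 2^{i-1}$ and then peels off the last two pieces (those possibly containing~$m$) at the end using $|X_j\cup X_{j-1}|\ge m/2$---whereas you discard the potentially degenerate arcs up front and phrase the doubling via the cumulative sums $n_k\ge 2n_{k-1}$; this is only a repackaging, and your version is arguably tidier. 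One minor sharpening: at most one arc can have $X_t=\emptyset$ (since exactly one copy of $m$ lies strictly between the two copies of $m-1$, that copy cannot be isolated by removable edges), so your ``at most two'' is a safe overcount but not tight.
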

\begin{proof}
Let $C$ be a $j$-edge-removable Skolem circle with $j\geq 1$.  Let $S_1,S_2,\dots, S_j$ be the $j$ subsequences of Skolem type or extended Skolem type that when concatenated together give $C$.  Let the symbols $\{1,2,\dots ,m\}$ as used in the Skolem circle be treated as integers with the usual ordering.  Let the sequences be ordered by their largest symbol.  Note that for all symbols $x\neq m$, both copies of $x$ are in the same sequence, thus the only ambiguity in the ordering is if the symbol $m$ is the largest symbol in two different subsequences, in which case these two may be ordered according to the next largest symbol (as noted below at most one of the sequences can contain only the symbol $m$ so there is no ambiguity to this ordering).  Note that if a subsequence $S_i$ contains only one of the symbol $m$  then $S_i$ is an extended Skolem sequence with $m$ as the null symbol $m$ is the null symbol, but the symbol will not be changed to $0$ (as in common in extended sequences).  At most two sequences contain $m$, and they will be ordered as $S_j$ (and $S_{j-1}$ if needed), thus $S_{i}$ for $i\leq j-2$ do not contain the symbol $m$, and must be Skolem type sequences.

The largest symbol appearing twice in $S_1$ is at least $1$. Let $x\in\{2,3,\dots,j-3\}$.  We proceed by induction on $x$.

Assume that for every $i\in\{1,2,3,\dots, x-1\}$ the largest symbol appearing in $S_i$ is at least $2^{i-1}$.  Let $t_i$ be the largest symbol appearing in $S_i$, then $t_i\geq 2^{i-1}$ .  Let $X_i$ be the set of symbols which appear in  the sequence $S_i$.  Let $\mathcal{X}=\bigcup _{i=1}^{x}X_i$, then $t_x$ is the largest symbol in $\mathcal{X}$.  Thus $|\mathcal{X}|\leq t_x$.  However the length of a Skolem type sequence  is at least one more than the largest symbol.  Therefore the number of distinct symbols appearing in $S_i$ is at least $(t_i+1)/2$.  Hence
\begin{align}
\sum_{i=1}^x\frac{t_i+1}{2} & \leq   |\mathcal{X}|\leq t_x\\
\left(\sum_{i=1}^{x-1}(t_i+1) \right)+t_x+1 & \leq 2t_x\\
\left(\sum_{i=1}^{x-1}(t_i+1) \right)+1 & \leq t_x
\end{align}

Using the inductive hypothesis
\begin{align}
\left(\sum_{i=1}^{x-1}(2^{i-1}+1) \right)+1 & \leq t_x\\
2^x+x & \leq t_x\\
2^{x-1} & \leq t_x
\end{align}
By induction the largest symbol appearing in $S_{j-2}$ is at least $2^{j-3}$.  

The sequences $S_j$ and $S_{j-1}$ must be non-empty.  Exactly one of the Skolem pair $(a_m,b_m)$ must lie between the Skolem pair $(a_{m-1},b_{m-1})$,  hence at least one of the sequences $S_j$ or $S_{j-1}$ must contain at least $m/2$ different symbols.  Hence $|X_j\cup X_{j-1}|\geq  m/2$.

Hence 
\[m/2\geq |\{1,2,\dots ,m\}\setminus (X_j\cup X_{j-1})|= |\bigcup_{i=1}^{j-2}S_i|\geq \sum_{i=1}^{j-2}(2^{i-1}+1)/2\geq 2^{j-3}\]
   which can be rearranged to $j\leq 2+\log_2(m)$.

\end{proof}

\section{Enumeration of Skolem Circles  \label{sec:enumerate}}
One of the fundamental questions asked about any combinatorial structure is how many?  We use the ideas of circle equivalence and a known bound on the number of Skolem sequences to give a general bound on the number of distinct Skolem circles.  We also compute the number of distinct Skolem circles for small orders.

There is a general lower bound on the number of distinct Skolem sequences of order $m$.
\begin{thm} \cite{abrham1986}\label{thm:abrahambound}
There are at least  $2^{\lfloor\nicefrac{m}{3}\rfloor}$  distinct Skolem sequences of order $m$ 
\end{thm}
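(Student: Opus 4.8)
The statement should be read with the standing hypothesis $m\equiv 0,1\bmod 4$, since for other $m$ no Skolem sequence of order $m$ exists (Theorem~\ref{thm:skolem}) and the bound would fail verbatim. For such $m$ the plan is a \emph{switchable component} argument: I would exhibit one Skolem sequence of order $m$ that contains $\lfloor m/3\rfloor$ pairwise disjoint local gadgets, each of which can be set independently to either of two states without destroying the Skolem property, so that the $2^{\lfloor m/3\rfloor}$ states yield pairwise distinct Skolem sequences.

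The gadget is a pair of length-$3$ blocks of positions carrying three consecutive symbols. Fixing $\delta\ge 3$ and disjoint position blocks $B=\{p,p+1,p+2\}$ and $B'=\{p+\delta,p+\delta+1,p+\delta+2\}$, a short case check shows that the only placements of the symbols $\{\delta-1,\delta,\delta+1\}$ on $B\cup B'$ in which each symbol's two occurrences lie at distance equal to that symbol are
\[
\text{(A)}\quad B=(\delta+1,\delta-1,\delta),\ B'=(\delta-1,\delta+1,\delta),
\qquad
\text{(B)}\quad B=(\delta,\delta+1,\delta-1),\ B'=(\delta,\delta-1,\delta+1),
\]
the blocks being written as ordered triples. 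States (A) and (B) occupy exactly the positions $B\cup B'$, realise the same three gaps, and already disagree at position $p$; hence, inside a larger word, flipping a gadget between (A) and (B) changes only the contents of $B\cup B'$, leaves the position pair of every other symbol fixed, and preserves every gap --- so it maps Skolem sequences to Skolem sequences.

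It then suffices to build, for each $m\equiv 0,1\bmod 4$, one Skolem sequence $S^\ast$ of order $m$ carrying $\lfloor m/3\rfloor$ such gadgets whose symbol triples are pairwise disjoint subsets of $\{1,\dots,m\}$ (their union missing only the at most two leftover symbols counted by $m-3\lfloor m/3\rfloor$) and whose position blocks are pairwise disjoint. Given $S^\ast$, every one of the $2^{\lfloor m/3\rfloor}$ state vectors in $\{\text A,\text B\}^{\lfloor m/3\rfloor}$ gives a Skolem sequence of order $m$, and distinct vectors give distinct words because they disagree on some gadget, hence on the contents of that gadget's blocks --- positions untouched by every other flip. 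This produces $2^{\lfloor m/3\rfloor}$ distinct Skolem sequences.

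The main obstacle is the construction of $S^\ast$. One natural route is recursive: check the claim by hand for the finitely many small orders, then give a step passing from a Skolem sequence of order $n$ together with its gadgets to one of order $n+12$ obtained by inserting a bundle of four fresh gadgets on the twelve new consecutive symbols, the insertion arranged so that old and new block-position sets remain disjoint and the residue of the order mod $4$ is preserved; iterating this step reaches every admissible $m$ while multiplying the gadget count correctly. The subtle part is entirely constructional --- writing down explicit placements that keep all block-position sets disjoint while each symbol of $\{1,\dots,m\}$ still appears exactly twice with the correct gap --- and once $S^\ast$ is available the switchability observation and the counting above complete the proof. (This is the analysis of Abrham~\cite{abrham1986}.)
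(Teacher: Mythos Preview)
The paper does not itself prove this theorem; it is quoted verbatim from Abrham~\cite{abrham1986} and used as a black box in the next result. So there is no in-paper argument to compare against, only Abrham's original.

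Your outline is in the spirit of that original: Abrham's bound is obtained precisely by exhibiting, inside explicit Skolem sequences, many pairwise disjoint ``switchable'' configurations --- small blocks that admit two local states without disturbing anything else. Your three-symbol gadget on $\{\delta-1,\delta,\delta+1\}$ is a clean incarnation of this idea, and the two states (A) and (B) check out.

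That said, what you have is a plan rather than a proof, and you say as much. The whole content of the theorem lies in producing the base sequence $S^\ast$ carrying $\lfloor m/3\rfloor$ disjoint gadgets, and your recursive sketch (``step from order $n$ to $n+12$ by inserting four fresh gadgets'') is only a wish: you would need to show how to splice twelve new symbols' worth of positions into an existing Skolem sequence so that (i) the old sequence's Skolem property survives, (ii) the old gadgets' position blocks stay intact, and (iii) the new blocks are disjoint from everything. Nothing in your write-up addresses (i)--(iii), and generic Skolem sequences resist such local surgery. Abrham handles this by working with specific explicit families of sequences in which the switchable pieces are visible from the outset, rather than by an abstract recursion; if you want a self-contained argument you will need either those explicit families or a concrete description of the $n\mapsto n+12$ splice.
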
 
Combine this with Theorem \ref{thm:breakpointbound}, and we have a lower bound on the number of distinct Skolem circles.

\begin{thm}
For $m\geq 8$ with $m\equiv 0,1\mod 4$, there is at least 
\[\frac{2^{\lfloor\nicefrac{m}{3}\rfloor-1}}{2+\log_2(m)}+1\]
Skolem circles of order $m$.
\end{thm}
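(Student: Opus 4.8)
The plan is to count Skolem circles by partitioning the known set of Skolem sequences of order $m$ into circle-equivalence classes and bounding the size of each class. By Theorem~\ref{thm:abrahambound} there are at least $2^{\lfloor m/3\rfloor}$ distinct Skolem sequences of order $m$. Every Skolem sequence, when wrapped around a cycle, yields a Skolem circle, so the Skolem circles of order $m$ that arise this way are exactly the circle-equivalence classes of Skolem sequences. To turn a count of sequences into a count of circles, I need an upper bound on how many Skolem sequences can lie in a single class.

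The key observation is the Lemma stated earlier: a Skolem circle with $j$ removable edges represents a set of $2j$ circle-equivalent Skolem sequences. Conversely, every Skolem sequence, viewed as a Skolem circle, has at least one removable edge (the ``seam'' where the ends were joined always separates it into Skolem-type pieces), so each circle-equivalence class of Skolem sequences corresponds to a Skolem circle with some number $j\geq 1$ of removable edges, and that class has size exactly $2j$. By Theorem~\ref{thm:breakpointbound}, more precisely its proof, $j\leq 2+\log_2(m)$. Hence each circle-equivalence class contains at most $2(2+\log_2(m))$ Skolem sequences.

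From here the counting is routine: the number of distinct Skolem circles of order $m$ (counting at least those obtained by wrapping) is at least the number of Skolem sequences divided by the maximum class size, i.e.
\[
\frac{2^{\lfloor m/3\rfloor}}{2\,(2+\log_2(m))} = \frac{2^{\lfloor m/3\rfloor - 1}}{2+\log_2(m)}.
\]
To obtain the stated bound with the extra ``$+1$'', I would note that in addition to all the circles that arise from wrapping Skolem sequences, there exists at least one further Skolem circle that is a genuine Skolem labelling of the cycle (zero removable edges), such as the order-$16$ example in Figure~\ref{fig:unbreakable}, or more generally one may invoke the Mendelsohn--Shalaby result cited after Theorem~\ref{thm:skolem} guaranteeing such labellings for $m\geq 8$ with $m\equiv 0,1\bmod 4$; this circle is not counted among the wrapped ones, giving the ``$+1$''.

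The main obstacle is making the correspondence between circle-equivalence classes and removable-edge counts fully precise, so that one is genuinely dividing by $2j$ and not over- or under-counting. In particular I should check that two Skolem sequences are circle-equivalent \emph{if and only if} they wrap to the same Skolem circle (this is essentially the definition, but the ``standard positional labelling'' discussion needs to be invoked to rule out that cyclic shifts or reversals collapse distinct circles), and that the size of the class really is $2j$ rather than being further reduced by a symmetry of the circle — i.e., that the $2j$ sequences listed are pairwise distinct as sequences. A palindromic or rotationally symmetric circle could in principle shrink a class, which would only \emph{help} the bound (larger quotient), so for a lower bound it is safe to use $2j$ as an \emph{upper} bound on class size regardless; I would state this carefully to avoid the appearance of circularity. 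The logarithm and floor bookkeeping at the end is then elementary.
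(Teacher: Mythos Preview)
Your proposal is correct and follows essentially the same route as the paper: count Skolem sequences via Theorem~\ref{thm:abrahambound}, bound the size of each circle-equivalence class by $2j$ with $j\le 2+\log_2 m$ from Theorem~\ref{thm:breakpointbound}, divide, and then add one for a $0$-edge-removable circle supplied by the Mendelsohn--Shalaby result. Your extra care about symmetric circles possibly shrinking a class (and the observation that this only helps a lower bound) is a refinement the paper does not make explicit.
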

\begin{proof}
From Theorem \ref{thm:abrahambound} there are at least $2^{\lfloor\nicefrac{m}{3}\rfloor}$  distinct Skolem sequences of order $m$.  From Theorem \ref{thm:breakpointbound} there are at most $2+\log_2(m+1)$ breakpoints in each Skolem circle, and hence each Skolem circle of order $m$ represents at most $2(2+\log_2(m+1))$ Skolem circles.  From \cite{mendelsohn1991} at least one $0$-edge-removable Skolem circle exists.
\end{proof}

Next we enumerate the number of distinct Skolem circles for small orders.  As with many combinatorial problems, the small orders $m=4$ and $m=5$ can be done by hand, then larger orders require computer based techniques.

\begin{thm} There is exactly one Skolem circle of order 4.
\end{thm}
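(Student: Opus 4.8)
The plan is to exploit the rigidity forced by the two largest symbols and reduce everything to a two-branch finite check. First I would normalise: in any Skolem circle of order $4$ the two copies of the symbol $1$ sit at cyclic distance $1$ on the $8$-cycle, hence on adjacent vertices, so after a rotation we may assume they occupy positions $1$ and $2$, i.e. $(a_1,b_1)=(1,2)$. The two copies of $4$ sit at cyclic distance $4$, so they are antipodal; since positions $1$ and $2$ are taken, the antipodal pair carrying $4$ must be $\{3,7\}$ or $\{4,8\}$.

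Next I would push through the two cases, noting that in each everything is forced. If $4$ occupies $\{3,7\}$, the remaining positions are $\{4,5,6,8\}$, and among these the only pair at cyclic distance $3$ is $\{5,8\}$, so that pair must carry $3$; the symbol $2$ is then forced onto $\{4,6\}$, whose cyclic distance is indeed $2$. This yields the labelling $(1,1,4,2,3,2,4,3)$ of Figure~\ref{fig:6sequences}. If instead $4$ occupies $\{4,8\}$, the free positions are $\{3,5,6,7\}$, the only distance-$3$ pair among them is $\{3,6\}$, which must carry $3$, forcing $2$ onto $\{5,7\}$ and giving $(1,1,3,4,2,3,2,4)$. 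Since every step is forced, these are the only two labellings of $C_8$ satisfying the Skolem circle conditions with the $1$'s at positions $1$ and $2$; in particular a Skolem circle of order $4$ exists, so the existence half comes for free (or one may simply quote that Skolem circles exist exactly when $m\equiv 0,1\bmod 4$).

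Finally I would close the gap between ``two labellings'' and ``one circle'': reversing direction turns $(1,1,4,2,3,2,4,3)$ into $(3,4,2,3,2,4,1,1)$, which is a cyclic shift of $(1,1,3,4,2,3,2,4)$, so the two labellings are circle equivalent in the sense defined in Section~\ref{sec:circle}; hence there is exactly one Skolem circle of order $4$. The only step demanding a little care --- and the mild obstacle in writing this cleanly --- is the bookkeeping that justifies assuming without loss of generality that the $1$'s are at positions $1$ and $2$ (one needs that the normalising rotation is well defined and that this enumeration captures \emph{every} Skolem circle of order $4$, including any that is a Skolem labelling of $C_8$ rather than a wrapped sequence), together with exhibiting the explicit shift-and-reversal at the end; past that, the whole argument is mechanical. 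An alternative would be to enumerate all six Skolem sequences of order $4$ outright and verify they constitute a single circle-equivalence class, but anchoring the $1$'s first cuts the casework down to the two branches above.
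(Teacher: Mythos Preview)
Your proof is correct and complete, but takes a genuinely different route from the paper. The paper's argument is extrinsic: it quotes that there are exactly six Skolem sequences of order $4$, observes that the single circle in Figure~\ref{fig:6sequences} contains all six, and then invokes \cite{mendelsohn1991} to rule out any $0$-edge-removable Skolem circle of order $4$. Your argument is intrinsic: you anchor the $1$'s, use the antipodality of the $4$'s to split into two cases, and verify that each case is forced and that the two resulting labellings differ by a reversal and cyclic shift. The virtue of your approach is that it is entirely self-contained --- no appeal to the enumeration in \cite{shalaby2007} or the non-existence result in \cite{mendelsohn1991} --- and it handles edge-removable and $0$-edge-removable circles uniformly, since you are enumerating \emph{all} weak Skolem labellings of $C_8$ directly rather than only those arising from wrapped sequences. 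The paper's approach, on the other hand, is shorter to state and scales in spirit to the order-$5$ case and beyond, where a direct case analysis would become tedious.
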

\begin{proof}
There are exactly 6 distinct Skolem sequences of order $4$ \cite{shalaby2007}. The Skolem circle of Figure \ref{fig:6sequences} contains all of these. Hence this is the only edge-removable Skolem circle.   From \cite{mendelsohn1991} we know that there are no $0$-edge-removable Skolem circles of order $4$.
\end{proof}

\begin{thm} There are exactly two Skolem circles of order 5.
\end{thm}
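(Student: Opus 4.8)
The plan is to reduce the count to circle-equivalence classes of Skolem sequences of order $5$ and then settle that finite count by hand. First I would dispose of any Skolem circle of order $5$ that is not obtained by wrapping a sequence around the cycle. By \cite{mendelsohn1991} a Skolem labelling of the cycle $C_{2m}$ exists only when $m\ge 8$, and $5<8$, so there is no $0$-edge-removable Skolem circle of order $5$. Hence every Skolem circle of order $5$ is the wrap of some order-$5$ Skolem sequence, and by the lemma that a $j$-edge-removable Skolem circle corresponds to a set of $2j$ circle-equivalent sequences, distinct Skolem circles of order $5$ correspond exactly to distinct circle-equivalence classes of order-$5$ Skolem sequences. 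It therefore suffices to show there are exactly two such classes.

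Next I would enumerate the Skolem sequences of order $5$; there are exactly $10$ of them (see \cite{shalaby2007}). This is also a short direct check: the two copies of the symbol $5$ sit at antipodal vertices of $C_{10}$, and a case analysis on the position of that pair, followed by the (largely forced) placements of $4$, then $3$, then $2$, yields exactly two sequences for each of the five positions of the $5$. I would record each of the ten sequences either in its standard positional labelling or, more conveniently, as its cyclic word of labels together with its set of five chords on the vertex set of $C_{10}$, since circle equivalence is just equality of chord-sets up to rotation and reflection.

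Finally I would sort the $10$ sequences into circle-equivalence classes. Reversing a Skolem sequence reflects its circle, so each sequence is circle equivalent to its reversal; this groups the list into (at most) five reversal-pairs. A handful of explicit cyclic rotations of the chord-sets then shows that four of these pairs coalesce into a single class of $6$ sequences, a $3$-edge-removable circle (for instance the wrap of $(1,1,3,4,5,3,2,4,2,5)$), while the remaining two sequences form a class of $4$, a $2$-edge-removable circle (for instance the wrap of $(4,5,1,1,4,3,5,2,3,2)$); note $6+4=10$. To confirm the two classes are genuinely different I would exhibit a rotation- and reflection-invariant that separates them, such as the number of crossing pairs among the five chords, which equals $4$ for the first circle and $3$ for the second. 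Combining these observations yields exactly two Skolem circles of order $5$. The only non-elementary ingredient is the nonexistence result of \cite{mendelsohn1991} invoked in the first step; without it one would have to rule out by hand every \emph{exotic} Skolem labelling of $C_{10}$, still a finite but longer check. The only points needing care are verifying that the list of ten sequences is complete and that the rotation and reflection identifications between them have been made correctly.
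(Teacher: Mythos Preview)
Your approach is exactly the paper's: rule out $0$-edge-removable circles via \cite{mendelsohn1991}, list the ten Skolem sequences of order $5$, and sort them into circle-equivalence classes. One slip to fix: five reversal-pairs cannot split as ``four pairs $\to$ class of $6$'' and ``two sequences $\to$ class of $4$''; you mean \emph{three} pairs coalesce into the $3$-edge-removable class of six and the remaining \emph{two pairs} form the $2$-edge-removable class of four (your representative sequences and your crossing-number invariant $4$ versus $3$ are correct as stated, and in fact your $6{+}4$ split is the right one --- the paper's own listing swaps the class sizes and contains typos in the second class).
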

\begin{proof}
There are exactly 10 distinct Skolem sequences of order 5 \cite{shalaby2007}. They can be partitioned into two circle equivalence classes.

The class $\{(1, 1, 5, 2, 4, 2, 3, 5, 4, 3)$, $(5, 2, 4, 2, 3, 5, 4, 3, 1, 1)$,
$(3, 4, 5, 3, 2, 4, 2, 5, 1, 1)$,\\ $(1, 1, 3, 4, 5, 3, 2, 4, 2, 5)\}$ is a $2$-edge-removable Skolem circle.  The class   $\{(1, 1, 5, 4, 2, 3, 2, 5, 3, 4)$,\\ $(5, 4, 2, 3, 2, 5, 3, 4, 1, 1)$, $(4, 1, 1, 5, 4, 2, 3, 2, 5, 3)$, $(4, 3, 5, 2, 3, 2, 4, 5, 1, 1)$, $(3, 5, 2, 3, 2, 4, 5, 1, 1, 4)$,\\ $(1, 1, 4, 3, 5, 2, 3, 2, 4, 5)\}$  is a $2$-edge-removable Skolem circle.

From Theorem \cite{mendelsohn1991} we know that there are no $0$-edge-removable Skolem circles of order $5$.
\end{proof}

For larger values of $m$   computer based searches were used. Computations were done on the QUT High Performance Computing facilities  using Matlab \cite{MATLAB} and C++\cite{C++}.  

Skolem circles (and Skolem sequences) of order $m$ can be classified according to the number of removable edges.  

The following technique was used to search for Skolem circles.  Begin by setting up sets of  logical vectors $A_i$ containing all the possible places for each Skolem pair $(a_i,b_i)$.  Note that the Skolem circle is in standard positional labeling, therefore there is exactly one place for placing the pair of $1$s, and $(m-2)$ for placing a pair of 2s. Thus $|A_1|=1$ and $|A_2|=m-2$, all other sets  $|A_i|=2m-4$, since the symbol 1 is already fixed in positions $1$ and $2$.  Vectors from the set $X_2,X_3,\dots $ are progressively chosen, and checked for any clashes.  In this way a complete list of Skolem circles can be created for small values of $m$. This algorithm was inspired by an algorithm previously used to enumerate Langford Sequences \cite{miller1997}.  Lists of Skolem circles of small order are available on  \url{http://www.joannelhall.com/gallery/skolem}. More computational time would obviously increase the size of the Skolem circles that can be catalogued.

The following algorithm was used to calculate the number of removable-edges in a Skolem circle.

\begin{alg}
\begin{enumerate}
\item Let $C$ be a Skolem circle, with standard labeling and let $X=\{(a_1,b_1),(a_2,b_2),$\\$\dots,(a_m,b_m)\}$ be the ordered pair representation of $C$.  As such $(a_1,b_1)=(1,1)$.

\item Let $\vec{v}$ be a vector of length $2m$ with $v_i$ being the $i^{th}$ component of $\vec{v}$.  For each $i\in\{2,3,4,\dots, m-1\}$  set the components $v_{a_{i}+1}, v_{a_{i}+2},\dots, v_{a_{i}+i-1}$ to $0$, where the subscripts are calculated $\mod 2m$. This sets the components of $\vec{v}$ with positions between Skolem pairs to $0$.   All components not set to $0$ are then set to $1$.  

\item If $\vec{v}=\vec{0}$, then $C$ is an $0$-edge removeable Skolem Circle.  The only components set to $1$ are those which are not between any Skolem pair, and are hence are nodes of a removable edge.

\item If the symbol $m$ has a  removable edge on either side, then the position $a_m$ (or $b_m$) is next to 2 removable edges.  Thus the two removable edges around the null sequence $(m)$ will induce $3$ 1s in the vector $\vec{v}$. Exactly one of $a_m$ or $b_m$ must be between $a_{m-1}$ and $b_{m-1}$, hence two adjacent removable edges can occur at most once in any circle.  All other removable edges are nonadjacent, and hence all other removable edges create a unique pair of adjacent 1s in the vector $\vec{v}$.

\item Let $\textrm{w}(\vec{v})$ be the Hamming weight of $\vec{v}$.  Then $C$ is a $j$-edgeremoveable Skolem circle with $j=\nicefrac{1}{2}\textrm{w}(\vec{v})$ if $\textrm{w}(\vec{v})$ is even,and $j= \nicefrac{1}{2}(\textrm{w}(\vec{v})+1)$ if $\textrm{w}(\vec{v})$ is odd.
\end{enumerate}
\end{alg}

Tables \ref{table:breakpoints} and \ref{table:count} contain the data gathered from this computation.

\begin{table}
\begin{tabular}{crrrrrrr}
removable edges & 0 & 1 & 2 & 3 &4 \\
\hline
m=4 & 0 & 0 &0 &  1 & 0\\
m= 5 & 0  & 0 &  1 & 1 &  0\\
m=8 & 24 & 96 & 60 & 12 &0 \\
m=9 & 280 &  574 & 284 & 62 & 0\\
m=12 &271,880 & 146,436 & 34,400 & 4,244 &0 \\
m= 13 & 2,742,984 & 1,035,186 & 207,756 & 22,810 & 288\\
m=16 & 3,764,810,632 & 530,928,868 & 75,697,744 & 5,872,996 & 33,760\\
m=17 & 46,071,353,270 & 4,751,383,672 & 620,552,462 & 43,754,420 & 184,848
\end{tabular}

\caption{\label{table:breakpoints}  The number of distinct Skolem circles of each order, classified according to the number of breakpoints}
\end{table}

\begin{table}
\begin{tabular}{crrr}
 & Skolem sequences \cite{shalaby2007} &  Skolem circles\\
\hline
m=4 & 6 & 1 \\
m=5 & 10 & 2 \\
m=8 & 504 & 192\\
m=9 & 2,656 & 1,200 \\
m=12 & 455,936 & 456,960 \\
m=13 & 3,040,560 & 4,009,024\\
m=16 & 1,400,156,768 & 4,377,344,000\\
m=17 & 12,248,982,496 & 51,487,228,672
\end{tabular}
\caption{\label{table:count} The number of distinct Skolem sequences and Skolem circles}
\end{table}

From Table \ref{table:count} note that for $m\geq 12$ the number of Skolem circles is greater than the number of Skolem sequences. 

A simple combinatorial argument shows that the number of  Skolem circles grows faster than the number of Skolem sequences.

Begin with an empty sequence with 2m positions to fill.  Then $a_1$ may take any value other than $2m$, this is $(2m-1)$ different possible values for $a_1$,  with $b_1=a_1+1$.    The symbol $a_2$ can take any value other that $2m, 2m-1$, thus there is a maximum of $2m-2$ different possible values for $a_2$, then $b_2=a_2+2$.  The symbol $a_3$ can take any value other that $2m, 2m-1,2m-2$, thus there is a maximum of $2m-3$ different possible values for $a_3$. The symbol $a_i$ with $i\geq 3$ can take any value other than $2m,2m-1,\dots, 2m-i+1$,  there is a maximum of $2m-i$ possible values for $a_1$. For $i=m-2$ there is a maximum of 2 possible arrangement for the symbols $m,m-1,m-2$\cite{miller1997}.

 Taking the product of these possibilities we see that there is a maximum of 
\[2\prod_{i=1}^{m-2}\left(2m-i\right)\sim O(m!)\]
possible Skolem sequences.

We use the same counting techniques to find an upper bound on the number of Skolem circles.  A Skolem circle always begins with $a_1=1$ and $b_1=2$.   Due to standard labelling, the symbol $a_2$ can take values from $\{3,4,\dots ,m\}$ thus there is a maximum of $m-2$ different possible values for $a_2$,  then  $b_2\equiv a_2+2\mod 2m$
 The symbol $a_i$ can take any value other that  $1$ and $2$, thus there is a maximum of $2m-2$ different possible values for $a_i$,  then  $b_i\equiv a_i+i\mod 2m$.  For $i=m-2$ there is a maximum of 2 possible arrangements for the symbols $m,m-1,m-2$\cite{miller1997}. Taking the product of these  possibilities we see that there is a maximum of 
\[2(m-2)\left(2m-2\right)^{m-5}\sim O(m^m) \]
possible Skolem circles.

These are both naive upper bounds, however they are calculated in the same way, so give us some intuition. 
\[O(m^m)>O(m!)\]
And hence the number of Skolem circles eventually grows faster than the number of Skolem sequences.  

\section{Further Ideas}
The motivation behind investigating Skolem circles was to portray sequences with symbols taken from a finite cyclic group, which was found to correlate well with previous work on Skolem labeling of circle graphs.  Investigating Skolem structures  with symbols taken from other group structures may correlate with Skolem labeling of other families of graph.

A similar investigation of Langford or Rosa  labeling of cycle graphs may also lead in interesting directions.

\section*{Acknowledgement}
J. Bubear was supported by an AMSI Vacation Research Scholarship.
Computational  resources  used in this work were provided by the High Performance Computing and Research Support Group, Queensland University of Technology, Brisbane, Australia.
Thanks to Diane Donavan and Asha Rao for discussions which lead to the idea of a Skolem circle.  Thanks to Daryn Bryant for suggesting the construction of pasting Langford sequences together.   Thanks to  Keving Hendrey and  Tim Wilson for describing  the proof of Theorem \ref{thm:9}. Thanks to the organisers of the 39th Australasian Conference on Combinatorial Mathematics and Combinatorial Computing, where an earlier version of this research was presented.

\end{document}